\newtheorem{theorem}{Theorem}[section]
\newtheorem{lemma}{Lemma}[section]
\newtheorem{corollary}{Corollary}[section]
\newcommand{\tr}{\bigtriangleup}
\newcommand{\Z}{\mathbb Z}
\def\l{\langle}
\def\r{\rangle}
\title{Primitive elements of the Hopf algebras of tableaux}
\author{C. Malvenuto}
\address{Claudia Malvenuto,
Dipartimento di Matematica ``Guido Castelnuovo'', Sapienza Universit\`a di Roma}
\email{claudia@mat.uniroma1.it}
\author{C. Reutenauer}
\address{Christophe Reutenauer,
D\'epartement de math\'ematiques, Universit\'e du Qu\'ebec \`a Montr\'eal}
\email{Reutenauer.Christophe@uqam.ca}
\date{\today}
\begin{document}

\maketitle

\tableofcontents

\begin{abstract} 
The character theory of symmetric groups, and the theory of symmetric functions, both make use of the combinatorics of Young tableaux, such as  the Robinson-Schensted algorithm, Sch\"utzenberger's {\em ``jeu de taquin''}, and evacuation.  In 1995 Poirier and the second author introduced  some algebraic structures, different from the plactic monoid, which induce some products and coproducts of tableaux, with homomorphisms. Their starting point are the two dual Hopf algebras of permutations, introduced by the authors in 1995.  In 2006 Aguiar and Sottile studied in more detail the Hopf algebra of permutations: among other things, they introduce a new basis, by M\"obius inversion in the poset of weak  order, that allows them to describe the primitive elements of the Hopf algebra of permutations. In the present Note, by a similar method, we determine the primitive elements of the Poirier-Reutenauer algebra of tableaux, using a partial order on tableaux defined by Taskin. 
\end{abstract}

\section{Introduction}
In 1995 the authors of the present paper introduced two dual Hopf algebra structures on permutations  \cite{MR}. The products and coproducts of permutations originated from the concatenation Hopf algebra and shuffle Hopf algebra on  $\mathbb Z\langle \mathbb N^{>0} \rangle$, the module generated by words of positive integers, from Solomon's descent algebra \cite{Sol} and Gessel's (internal) coalgebra \cite{G} of quasi--symmetric functions. The two Hopf structures on $\mathbb Z S$, the module with $\mathbb Z$--basis all the permutations in $S=\cup_{n\geq 0} S_n$, for $S_n$ the symmetric group on  $\{1,\ldots,n\}$, are autodual \cite{BS}. 

In the same year \cite{PR}, carrying on these themes, Poirier and the second author proved that the two dual Hopf structures on $\mathbb Z S$ are free associative algebras. By restriction on the plactic classes they obtained two dual structures of Hopf algebras on the $\mathbb Z$--module $\mathbb Z S$ with basis the set $T$ of all standard Young tableaux. The product and coproduct are described there in term of Sch\"utzenberger's {\em ``jeu de taquin''} \cite{LS}. They also provided different morphisms between these structures and the descent algebras, symmetric functions and quasi-symmetric functions. In particular, the map sending a permutation into its left tableau in the Schensted correspondence is a Hopf morphism. 

Loday and Ronco \cite{LR} characterized the product of two permutations by the use of the weak order of permutations: it is the sum of all permutations in some interval for this order. In 2005 \cite{AS}, Aguiar and Sottile studied in further and thorough details the structure of  the Hopf algebras of permutations, giving explicit formulas for its antipode, proving that it is a cofree algebra and determining its primitive elements. For the latter task, they introduced a new basis of $\mathbb Z S$, related to the basis of permutations via M\"obius inversion in the poset of the Bruhat weak order of the symmetric groups. In \cite{DHNT}, Duchamp, Hivert, Novelli and Thibon studied the Hopf algebra of permutations (denoted there $FQSYM$), and  gave among others a faithful representation by noncommutative polynomials.

The Hopf algebra of tableaux was used by J\"ollenbeck \cite{J}, and Blessenohl and Schocker \cite{BS}, to define their noncommutative character theory of the symmetric group. Moreover, Muge Taskin \cite{T} used the order on tableaux, induced by the weak order of permutations, to characterize the product of two tableaux, in a way reminiscent of the result of Loday and Ronco.

The purpose of this Note is to find the primitive elements of $\mathbb Z T$, the Hopf algebra of tableaux with respect to the product and coproduct, following the approach of Aguiar and Sottile. A new basis for $\mathbb Z T$ is obtained by M\"obius inversion for the Taskin order of tableaux. The nice feature of the proofs here is that we manage to avoid {\em ``jeu de taquin''}, and use a simpler description through a shifted left and right concatenation product of tableaux. 

\section{Preliminaries on permutations}

We denote by $S_n$ symmetric group on $\{1,\ldots,n\}$. We often represent permutations as words: $\sigma\in S_n$ 
is represented as the word $\sigma(1)\sigma(2)\cdots\sigma(n)$. By abuse of notation, we identify $\sigma$ and the 
corresponding word. A {\em word} in the sequel will always be on the alphabet of positive integers, also called {\em 
letters}. We denote by $|\sigma|$ the number of letters of $\sigma$.

We denote by $\leq$ the {\em right weak order} of permutations. Recall that it is defined as the reflexive and 
transitive closure of the relation $u < v$, $u,v\in S_n$, $v=u\circ\tau$, for some adjacent transposition $\tau\in S_n$  
such that 
$l(u)<l(v)$, where $l(u)$ denotes as usual the  {\em length} of $u$ in the sense of Coxeter groups. Recall that this order may also be defined by comparing inversions sets: let $Inv(\sigma)$ be the set 
of inversions (`by values') of $\sigma$, that is, the set of pairs $(j,i)$ with $j>i$ and $\sigma^{-1}(j)<\sigma^{-1}(i)$; then 
$u\leq v$ 
if and only if $Inv(u)\subseteq  Inv(v)$. Note that $(j,i)$ is an inversion of $\sigma$ if and only if $j>i$ and $j$ appears on 
the left of $i$ in the word representing $\sigma$. This definition applies also to any word.

Given $\sigma \in S_n$, and a subset $I$ of $[n]$, $\sigma\mid I$ denotes the word obtained by removing in the word 
$\sigma$ the digits not in $I$ (whereas the restriction of $\sigma$ to $I$   is the subword $\sigma\mid_I$ of the images of the letters in $I$). For example, for $\sigma=2517643$, one has  $2517643\mid\{2,3,6\}=263$ (while $\sigma\mid_{\{2,3,6\}}=514$).

Moreover, $v$ being a word without repetition of letters, $st(v)$ denotes the {\em standard permutation} of the word 
$v$, obtained by replacing each letter in $v$ by its image under the unique increasing bijection form the set of letters in 
$v$ onto $\{1,2,\ldots,|v|\}$. For example, $st(5713)=3412$.

\begin{lemma}\label{restr-order-perm}
If $s\leq t$ are permutations in $S_n$  and $I$ an interval in $\{1,\ldots,n\}$, then $st(s\mid I)\leq st(t\mid I)$.
\end{lemma}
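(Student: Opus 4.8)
The plan is to reduce everything to the inversion-set criterion $u \leq v \iff Inv(u) \subseteq Inv(v)$ recalled above, and to track how inversions behave under the two operations involved: restriction to $I$ and standardization.

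First I would record the behaviour of inversions under restriction. Since deleting from the word $\sigma$ the letters outside $I$ does not alter the left-to-right order of the surviving letters, a pair $(j,i)$ with $j,i \in I$ is an inversion of $\sigma \mid I$ exactly when it is an inversion of $\sigma$. Writing $P_I = \{(j,i) : j > i,\; j,i \in I\}$, this says precisely $Inv(\sigma \mid I) = Inv(\sigma) \cap P_I$; as every letter of $I$ occurs once in any element of $S_n$, this identity applies uniformly to $\sigma = s$ and to $\sigma = t$.

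Next I would record the behaviour under standardization. The map $st$ replaces each letter of a repetition-free word by its rank, which is the unique order-preserving bijection $\phi_I$ from the letter set onto $\{1,\ldots,|I|\}$; when $I$ is an interval this is simply a shift. Because $\phi_I$ preserves order, it carries inversions to inversions bijectively, so $Inv(st(w)) = \phi_I(Inv(w))$ for every repetition-free word $w$ whose letter set is $I$. The essential point here is that $s \mid I$ and $t \mid I$ share the same letter set $I$, and hence the same $\phi_I$; so standardization relabels the inversions of both words by one and the same bijection.

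Finally I would combine the two observations. From $s \leq t$ we have $Inv(s) \subseteq Inv(t)$, whence $Inv(s\mid I) = Inv(s) \cap P_I \subseteq Inv(t) \cap P_I = Inv(t \mid I)$; applying $\phi_I$ yields $Inv(st(s\mid I)) \subseteq Inv(st(t\mid I))$, that is, $st(s \mid I) \leq st(t \mid I)$. I expect the only delicate point to be the justification that a single standardization bijection $\phi_I$ serves both words — which rests on $s$ and $t$ being genuine permutations of $[n]$, so that each contains every element of the interval $I$; once that is secured, the argument is pure bookkeeping with inversion sets.
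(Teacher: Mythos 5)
Your proof is correct and follows essentially the same route as the paper's: an inversion of $s\mid I$ is an inversion of $s$, hence of $t$, hence of $t\mid I$, and standardization, being an increasing bijection (the same one for both words, as you rightly emphasize), preserves the inclusion of inversion sets. The paper's proof is simply a terser rendering of this same inversion-set bookkeeping.
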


\begin{proof}
Let $(j,i)$ be an inversion of $s\mid I$; then $i,j\in I$. Then $(j,i)$ is an inversion of $s$, hence of $t$. It follows that 
it is also an inversion of $t\mid I$.
Since standardizing amounts to apply an increasing bijection, we deduce the lemma.
\end{proof}

Let $S=\bigcup_{n\geq 0} S_n$ be the disjoint union of all symmetric groups. There is a classical associative product on $S$, denoted by $\square$, which turns it into a free monoid \cite{PR}: let $u\in S_p$ and $v\in S_q$; let $\bar v$ be obtained by adding $p$ to each digit in $v$; then $u\square v$ is the concatenation $u\bar v$ of $u$ and $\bar v$ (or {\em right shifted concatenation}). For example, $231\square 12=23145$, with here $p=3$. The free generators of this free monoid are the {\em indecomposable} permutations, which have some importance in algebraic combinatorics; see \cite{C}.

A variant of this product is as follows: given two permutations as above, $v\bigtriangleup u=\bar vu$ (which we refer to as  {\em left shifted concatenation}). Example: $12\bigtriangleup 231=45231$. 

Clearly, the product $\square$ and the opposite product of $\tr$ are conjugate under the mapping $w\mapsto \tilde w$ which reverses words:
$$
v\tr u=\widetilde{\tilde u\square \tilde v}.
$$ 
It follows that  $S$ with the product $\tr$ is also a free monoid, freely generated by the 
permutations which are indecomposable for this product, which are the reversals of the indecomposable permutations.

For later use, we need 

\begin{lemma}\label{perm-comp} The weak order $\leq$ on permutations is compatible with the product $\tr$: $u\leq u', v\leq v'\Rightarrow v\tr u\leq v'\tr u'$.
\end{lemma}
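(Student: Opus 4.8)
The plan is to work entirely with the inversion-set characterization of the right weak order recalled above, namely $\alpha\le\beta$ if and only if $Inv(\alpha)\subseteq Inv(\beta)$. Since $u\le u'$ forces $u,u'\in S_p$ and $v\le v'$ forces $v,v'\in S_q$ for fixed $p,q$, both products $v\tr u$ and $v'\tr u'$ live in $S_{p+q}$, so it suffices to prove the inclusion $Inv(v\tr u)\subseteq Inv(v'\tr u')$.

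First I would compute $Inv(v\tr u)$ explicitly. Recall $v\tr u=\bar v u$, where the prefix $\bar v$ uses the values $\{p+1,\dots,p+q\}$ and the suffix $u$ uses $\{1,\dots,p\}$. Classifying an inversion $(j,i)$ of the word $\bar v u$ according to whether its two entries lie in the prefix or the suffix yields three contributions: pairs with both entries in $\bar v$ reproduce $Inv(v)$ shifted by $p$ (adding a constant preserves the relative order, hence the inversions); pairs with both entries in $u$ reproduce $Inv(u)$; and a ``crossing'' family consisting of all pairs $(b,a)$ with $p<b\le p+q$ and $1\le a\le p$. The reverse crossings, with left entry in $u$ and right entry in $\bar v$, are never inversions, since then the left value is at most $p$ while the right value exceeds $p$. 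Thus
\[
Inv(v\tr u)=\{(j+p,\,i+p):(j,i)\in Inv(v)\}\ \cup\ Inv(u)\ \cup\ C,
\]
where $C=\{(b,a):p<b\le p+q,\ 1\le a\le p\}$ depends only on $p$ and $q$.

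The decomposition then finishes the argument: applying the same formula to $v'\tr u'$ produces the identical crossing set $C$, while $Inv(u)\subseteq Inv(u')$ and $Inv(v)\subseteq Inv(v')$ (hence the shifted sets are nested as well) by hypothesis, so taking unions preserves these inclusions and gives $Inv(v\tr u)\subseteq Inv(v'\tr u')$, i.e.\ $v\tr u\le v'\tr u'$. The only point requiring care, and the one I would check most carefully, is the verification that every crossing pair is forced to be an inversion independently of $u$ and $v$ (so that $C$ is genuinely constant) and that no reverse crossing ever contributes; once this bookkeeping is settled the result is immediate. One could instead argue through the generating adjacent transpositions, but the inversion-set computation is cleaner and avoids tracking covers through the shift by $p$.
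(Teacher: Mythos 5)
Your proof is correct and follows essentially the same route as the paper: both use the inversion-set characterization of the weak order and the same three-way classification of inversions of $\bar v u$ (both entries in $u$, both in $\bar v$, or crossing pairs, the last being inversions automatically). Packaging this as an explicit formula $Inv(v\tr u)=\{(j+p,i+p):(j,i)\in Inv(v)\}\cup Inv(u)\cup C$ is just a slightly more structured presentation of the paper's elementwise case analysis.
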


\begin{proof}
Let $u\in S_p$, $v\in S_q$. Suppose that $u\leq u'$, $v\leq v'$. We show that $v\tr u\leq v'\tr u'$. Let $(j,i)$ be an 
inversion of $v\tr u=\bar vu$. If $i,j\leq p$, then by construction of the product $\tr$, $(j,i)$ is an inversion of $u$, hence of 
$u'$ (since $u\leq u'$), hence of $v'\tr 
u'$. If $i,j> p$, then $(j-p,i-p)$ is an inversion of $v$, hence of $v'$ (since $v\leq v'$) and therefore $(j,i)$ is an 
inversion of $v'\tr u'$. If $i\leq p$ and $j>p$, then $(j,i)$ is an inversion of $v'\tr u'$. There is no other case, since $i<j$.
\end{proof} 

\section{Hopf algebra of permutations}\label{ZS}

Denote by $\mathbb Z S$ be the free $\mathbb Z$-module with basis $S$. We define on $\mathbb Z S$ a product, denoted by $*$ (called {\em destandardized 
concatenation}),
and a coproduct (called {\em standardized unshuffling}), denoted by $\delta$, which turn it into a Hopf algebra (see \cite{MR}). If $
\alpha\in S_p$, $\beta\in S_q$, $\alpha * \beta$ is the sum of all permutations in $S_{p+q}$ of the form $uv$ 
(concatenation of $u$ and $v$), where $u,v$ are of respective lengths $p,q$ and $st(u)=\alpha$, 
$st(v)=\beta$; for example, $12*21=1243+1342+1432+2341+2431+3421$. Moreover, for $\sigma\in S_n$, 
$$
\delta(\sigma)=\sum_{0\leq i\leq n} \sigma\mid\{1,\ldots,i\}\otimes st( \sigma\mid\{i+1,\ldots,n\}).$$ 

%numbering from 1 to the length $|v|$ of $v$ the letters of $v$, starting from the smallest.
An example of coproduct is
$\delta(3124)=\epsilon\otimes st(3124)+1\otimes st(324)+12\otimes st(34)+312\otimes st(4)+3124\otimes 
\epsilon=\epsilon\otimes 3124 + 1\otimes 213+12\otimes 12+312\otimes 1+3124\otimes \epsilon$. Here $\epsilon$ is the empty permutation in $S_0$, the neutral element of the bialgebra $\mathbb Z S$.

Following Aguiar and Sottile \cite{AS},
%\footnote{Aguiar and Sottile deal with the isomorphic dual Hopf algebra, the isomorphism being $\sigma\mapsto\sigma^{-1}$}
we define a new linear basis $\mathcal M_\sigma$ of $\mathbb Z S$, indexed by permutations, and called {\em monomial basis}. (Notice that Aguiar and Sottile deal with the isomorphic dual Hopf algebra, the isomorphism being $\sigma\mapsto\sigma^{-1}$, and also that they use the left weak order.) These elements are given by the formula
$$ \sigma=\sum_{\sigma\leq w}\mathcal M_w,$$
which defines them recursively, via M\"obius inversion formula, since $\leq $ is an order.

The following result is equivalent to a result due to Aguiar and Sottile (\cite{AS} Theorem 3.1).

\begin{theorem}\label{AS}
For any permutation $\sigma$, one has 
$$\delta (\mathcal M_\sigma)=\sum_{\sigma=v\tr u}\mathcal M_u\otimes \mathcal M_v.$$
\end{theorem}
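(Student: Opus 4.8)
The plan is to exploit that the monomial basis is obtained from the fundamental basis $\{\sigma\}$ by a unitriangular change of basis, namely the zeta transform $\sigma=\sum_{\sigma\leq w}\mathcal M_w$ with respect to the weak order. Consequently, proving $\delta(\mathcal M_\sigma)=\sum_{\sigma=v\tr u}\mathcal M_u\otimes\mathcal M_v$ for every $\sigma$ is equivalent, after applying this transform to both sides (M\"obius inversion), to the single identity
\[
\delta(\sigma)=\sum_{\sigma\leq w}\ \sum_{w=v\tr u}\mathcal M_u\otimes\mathcal M_v
\qquad\text{for all }\sigma\in S_n .
\]
I would establish this by expanding each side in the basis $\{\mathcal M_u\otimes\mathcal M_v\}$ of $\mathbb Z S\otimes\mathbb Z S$ and matching coefficients.

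For the right-hand side, note that $v\tr u$ is a \emph{single} permutation determined by the pair $(u,v)$, so the double sum collapses: reindexing by $(u,v)$, the coefficient of $\mathcal M_u\otimes\mathcal M_v$ is $1$ if $\sigma\leq v\tr u$ and $0$ otherwise. For the left-hand side, I would use the explicit coproduct $\delta(\sigma)=\sum_{i}\sigma\mid\{1,\ldots,i\}\otimes st(\sigma\mid\{i+1,\ldots,n\})$ and expand each tensor factor in the monomial basis via $\sigma\mid\{1,\ldots,i\}=\sum_{\sigma\mid\{1,\ldots,i\}\leq a}\mathcal M_a$ and the analogous expansion for the standardized right part. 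For a fixed pair $(a,b)$ with $|a|=p$, only the term $i=p$ contributes, so the coefficient of $\mathcal M_a\otimes\mathcal M_b$ equals $1$ precisely when $\sigma\mid\{1,\ldots,p\}\leq a$ and $st(\sigma\mid\{p+1,\ldots,n\})\leq b$. Matching the two coefficient descriptions reduces the whole theorem to the following key combinatorial equivalence, for $u\in S_p$, $v\in S_q$, $p+q=n=|\sigma|$:
\[
\sigma\leq v\tr u
\iff
\sigma\mid\{1,\ldots,p\}\leq u \ \text{ and }\ st(\sigma\mid\{p+1,\ldots,n\})\leq v .
\]

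I would prove this equivalence through the inversion-set criterion $\sigma\leq\tau\iff Inv(\sigma)\subseteq Inv(\tau)$. The point is to describe $Inv(v\tr u)=Inv(\bar v u)$: it decomposes into the inversions among values $\leq p$ (which are exactly $Inv(u)$), the inversions among values $>p$ (which are $Inv(v)$ shifted by $p$), and the set of \emph{all} mixed pairs $(j,i)$ with $j>p\geq i$ — every such pair is an inversion because in $\bar v u$ the large values all precede the small ones. Splitting $Inv(\sigma)$ along the same three types, its small-small part is $Inv(\sigma\mid\{1,\ldots,p\})$, its large-large part is $Inv(st(\sigma\mid\{p+1,\ldots,n\}))$ (Lemma \ref{restr-order-perm} guarantees these restrictions behave correctly), and its mixed part is automatically contained in the full set of mixed pairs. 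Hence $Inv(\sigma)\subseteq Inv(v\tr u)$ decouples into exactly the two stated conditions.

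The crux of the argument, and the step I expect to be the main obstacle to state cleanly, is this last lemma — specifically the observation that the mixed inversions impose \emph{no} constraint. This is precisely the phenomenon that makes the coproduct on the monomial basis detect only the $\tr$-factorizations of $\sigma$, and it is the combinatorial heart mirroring the global-descent description of Aguiar and Sottile. Once the equivalence is in hand, the remaining work is the routine coefficient bookkeeping of the two paragraphs above, together with the formal M\"obius-inversion reduction at the start.
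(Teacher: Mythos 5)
Your proposal is correct and takes essentially the same route as the paper: the same reduction via the zeta/M\"obius change of basis to checking the identity on the permutation basis, the same coefficient decoupling, and the same key equivalence $\sigma\leq v\tr u\iff \sigma\mid\{1,\ldots,p\}\leq u$ and $st(\sigma\mid\{p+1,\ldots,n\})\leq v$, which is exactly the paper's Lemma \ref{sigma-a-b}. Your proof of that equivalence by a single inversion-set decomposition of $Inv(v\tr u)$ (with the mixed pairs imposing no constraint) is just a merged, slightly more direct organization of the paper's argument, which proves the two implications separately, routing one direction through $\sigma\leq b\tr a$ and the compatibility Lemma \ref{perm-comp}.
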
 

We give below the proof of this result; it may help to understand the proof of the similar result on the Hopf algebra of tableaux, which we give in Section \ref{prim}. 

We begin by a lemma, that will also have an analogue for tableaux.

\begin{lemma}\label{sigma-a-b} Let $n=p+q$, $\sigma\in S_n$, $a=\sigma\mid\{1,\ldots,p\}$, $b=st(\sigma\mid\{p+1,\ldots,n\}$. 
Then, for $v\in S_q$, $u\in S_p$, $\sigma\leq v\tr u$ if and only if $a\leq u$ and $b\leq v$.
\end{lemma}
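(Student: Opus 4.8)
The plan is to characterize the inversions of $v\tr u$ explicitly and compare them with the inversions of $\sigma$, using the inversion-set characterization of the weak order ($s\leq t$ iff $\mathrm{Inv}(s)\subseteq\mathrm{Inv}(t)$). First I would note that the letters $\{1,\ldots,p\}$ and $\{p+1,\ldots,n\}$ partition $[n]$ into two intervals, and that $a=\sigma\mid\{1,\ldots,p\}$ is already a permutation of $\{1,\ldots,p\}$ while $b=st(\sigma\mid\{p+1,\ldots,n\})$ is its standardization. By the definition of the left shifted concatenation, $v\tr u=\bar v\,u$ has its small letters $\{1,\ldots,p\}$ forming exactly the subword $u$ and its large letters $\{p+1,\ldots,n\}$ forming $\bar v$, with every large letter occurring to the left of every small letter.

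The key observation is that inversions of $v\tr u$ split into three disjoint types according to the sizes of the two entries: an inversion $(j,i)$ with $i,j\leq p$ is exactly an inversion of $u$; an inversion with $i,j>p$ corresponds via the shift $(j,i)\mapsto(j-p,i-p)$ to an inversion of $v$; and every pair $(j,i)$ with $j>p\geq i$ is automatically an inversion of $v\tr u$, since in $\bar v\,u$ all large letters precede all small letters. I would establish the same three-type decomposition for $\sigma$ itself: the within-$\{1,\ldots,p\}$ inversions of $\sigma$ are precisely the inversions of $a=\sigma\mid\{1,\ldots,p\}$, and the within-$\{p+1,\ldots,n\}$ inversions of $\sigma$ are precisely those of $b$ (standardizing is an increasing bijection, so it preserves inversion sets). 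The mixed inversions $(j,i)$ with $j>p\geq i$ are a subset of the full mixed pairs, but this subset is irrelevant to the comparison because $v\tr u$ contains all mixed pairs as inversions.

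From here the equivalence should fall out by comparing inversion sets type by type. For the forward direction, $\sigma\leq v\tr u$ gives $\mathrm{Inv}(\sigma)\subseteq\mathrm{Inv}(v\tr u)$; restricting to small-small pairs yields $\mathrm{Inv}(a)\subseteq\mathrm{Inv}(u)$, hence $a\leq u$, and restricting to large-large pairs (after the shift) yields $\mathrm{Inv}(b)\subseteq\mathrm{Inv}(v)$, hence $b\leq v$. For the converse, assume $a\leq u$ and $b\leq v$; I would take an arbitrary inversion $(j,i)$ of $\sigma$ and check it lies in $\mathrm{Inv}(v\tr u)$ by cases: if both entries are $\leq p$ it is an inversion of $a$, hence of $u$, hence of $v\tr u$; if both are $>p$ it shifts to an inversion of $b$, hence of $v$, hence of $v\tr u$; and if it is mixed it is automatically in $\mathrm{Inv}(v\tr u)$. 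This covers all cases since there are no $i<j$ outside these three types.

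The main obstacle is the bookkeeping around standardization: one must verify cleanly that $\mathrm{Inv}(b)$, after undoing the shift by $p$, matches exactly the large-large inversions of $\sigma$, and that the mixed inversions of $\sigma$ never impose any constraint because $v\tr u$ already realizes all mixed pairs as inversions. This last point is the structural heart of why the left shifted concatenation (rather than some other product) is the right operation dual to the coproduct $\delta$, and Lemma \ref{restr-order-perm} is the clean tool that handles the restriction-to-an-interval steps.
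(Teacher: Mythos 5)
Your proposal is correct and takes essentially the same approach as the paper: both arguments rest on the inversion-set characterization of the weak order together with the three-way split of pairs (both entries $\leq p$, both $>p$, mixed), the key point being that all mixed pairs are automatically inversions of $v\tr u$. The only difference is organizational: the paper routes the ``if'' direction through the intermediate inequality $\sigma\leq b\tr a$ and then invokes the compatibility Lemma~\ref{perm-comp}, whereas you inline that monotonicity argument into a single direct case analysis of the inversions of $\sigma$.
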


\begin{proof}
1. We show first that $\sigma\leq b\tr a$. We have $b\tr a=\overline{b}a$ (concatenation of words), where it is easily seen that $\overline{b}=\sigma\mid\{p+1,\ldots,n\}$ (add $p$ to each letter of 
$b$).

Let $(j,i)$ be an inversion of $\sigma$. Then $i<j$ and $j$ is at the left of $i$ in $\sigma$. If $j,i\leq p$, then $(j,i)$ 
is an inversion of $a$, and therefore also of $b\tr a$. If $j,i>p$, then $(j,i)$ is an inversion of $\overline{b}$, hence of $b\tr 
a$. If $i\leq p$ and $j>p$, then $(j,i)$ is an inversion of $b\tr a$, since in this latter permutation, each letter $>p$ is 
at the left of each letter $\leq p$. There is no other case since $j>i$. 

Thus $(j,i)$ is in each case an inversion of $b\tr a$, hence $\sigma\leq b\tr a$.

2. Suppose that $a\leq u$ and $b\leq v$. Then clearly  $b\tr a\leq v\tr u$, by Lemma \ref{perm-comp}. Hence by 1, $\sigma\leq v\tr u$.

3. Suppose that $\sigma\leq v\tr u$. If $(j,i)$ is an inversion of $a$, then it is an inversion of $\sigma$, hence of
$v\tr u=\bar vu$; therefore it is an inversion of $u$ since $i,j\leq p$; thus $a\leq u$. Similarly, $b\leq v$.
\end{proof}

\begin{proof}[Proof of Theorem \ref{AS}]

Define the $\Z$-linear mapping $\delta_1:\Z S\mapsto \Z S\otimes \Z S$ by $\delta_1(\mathcal M_\sigma)=\sum_{\sigma=v\tr u}\mathcal M_u\otimes \mathcal M_v$. It is enough to 
show that $\delta_1=\delta$. We have for any permutation $\sigma\in S_n$, 
$$
\delta_1(\sigma)=\delta_1(\sum_{\sigma\leq w}\mathcal M_w)=
\sum_{\sigma\leq w}\sum_{w=v\tr u}\mathcal M_u\otimes \mathcal 
M_v=
\sum_{\sigma\leq v\tr u}\mathcal M_u\otimes \mathcal 
M_v.$$ 
This is by Lemma \ref{sigma-a-b} equal to 
$$\sum_{0\leq i\leq n}
\sum_{\sigma\mid\{1,\ldots,i\}\leq u \atop
st(\sigma\mid\{i+1,\ldots,n\}\leq v}
\mathcal M_u\otimes \mathcal M_v$$
$$=\sum_{0\leq i\leq n}(\sum_{\sigma\mid\{1,\ldots,i\}
\leq u}\mathcal M_u)\otimes (\sum_{st(\sigma\mid\{i+1,\ldots,n\})\leq v} \mathcal M_v)$$
$$=\sum_{0\leq i\leq n}
\sigma\mid\{1,\ldots,i\} \otimes st(\sigma\mid\{i+1,\ldots,n\}=\delta(\sigma),$$ by the definition of the basis $\mathcal M_\sigma$.
\end{proof}

In order to understand the equivalence between the previous theorem and the theorem of Aguiar and Sottile, it 
may be useful to use the notion of global descents, introduced by them. Recall that according to Aguiar and Sottile (\cite{AS}, Definition 2.12), a {\em global 
descent} of $\sigma\in S_n$ is a position $i\in\{1,\ldots,n-1\}$ such that for any $j\leq i$ and any $k>i$ one has $
\sigma(j)>\sigma(k)$.

Then the permutations that are indecomposable for the product $\tr$ (which are the free generators of the free monoid $S$) are those which have no global descents. Moreover, if $\sigma=\sigma_1\tr\ldots\tr \sigma_k$ with 
indecomposable $\sigma_i$'s, then the global descents of $\sigma$ are the positions $|\sigma_1|, |\sigma_1|+|
\sigma_2|,\ldots,|\sigma_1|+\ldots |\sigma_{k-1}|$. 

For example, $78465213=12\tr 132 \tr 213$ has 2 and 5 as global descents, and $12,132,213$ are 
indecomposable for $\tr$, equivalently, have no global descents.

\begin{corollary} The submodule of primitive elements of $\Z S$ is spanned by the $\mathcal M_\sigma$ such that $\sigma$ has no global descent, or equivalently, $\sigma$ is indecomposable for the product $\tr$.
\end{corollary}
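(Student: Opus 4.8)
The plan is to read off the primitive elements directly from the coproduct formula of Theorem \ref{AS}, exploiting that the elements $\mathcal M_\sigma$ form a basis and that $\tr$ is a genuine single-valued operation. Recall that $x\in\Z S$ is primitive when $\delta(x)=x\otimes\epsilon+\epsilon\otimes x$, where $\epsilon$ is the unit. First I would record that $\mathcal M_\epsilon=\epsilon$: the defining relation $\epsilon=\sum_{\epsilon\leq w}\mathcal M_w$ runs only over $S_0=\{\epsilon\}$, whence $\mathcal M_\epsilon=\epsilon$.

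Next I would isolate the two trivial factorizations. For every $\sigma$ one has $\sigma=\sigma\tr\epsilon=\epsilon\tr\sigma$, and these are the only factorizations $\sigma=v\tr u$ with $u$ or $v$ empty. Splitting them off in Theorem \ref{AS} and using $\mathcal M_\epsilon=\epsilon$ gives
$$\delta(\mathcal M_\sigma)=\mathcal M_\sigma\otimes\epsilon+\epsilon\otimes\mathcal M_\sigma+\sum_{\substack{\sigma=v\tr u\\ u,v\neq\epsilon}}\mathcal M_u\otimes\mathcal M_v,$$
so the reduced coproduct is $\bar\delta(\mathcal M_\sigma)=\sum_{\sigma=v\tr u,\ u,v\neq\epsilon}\mathcal M_u\otimes\mathcal M_v$. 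Thus $\mathcal M_\sigma$ is primitive exactly when this sum is empty, that is, when $\sigma$ admits no nontrivial factorization, i.e.\ $\sigma$ is indecomposable for $\tr$. This already shows that the listed elements are primitive.

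For the converse --- that they span \emph{all} primitives --- the key observation, which I expect to be the crux, is that for fixed nonempty $u,v$ there is a unique permutation realizing that factorization, namely $v\tr u$. Hence for an arbitrary $x=\sum_\sigma c_\sigma\mathcal M_\sigma$ the coefficient of the basis tensor $\mathcal M_u\otimes\mathcal M_v$ in $\bar\delta(x)$ is simply $c_{v\tr u}$, with no collisions between distinct source permutations. Since the $\mathcal M_u\otimes\mathcal M_v$ are linearly independent in $\Z S\otimes\Z S$, the condition $\bar\delta(x)=0$ is equivalent to $c_{v\tr u}=0$ for all nonempty $u,v$, that is, $c_\sigma=0$ for every decomposable $\sigma$. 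Therefore $x$ is primitive if and only if it lies in the span of the $\mathcal M_\sigma$ with $\sigma$ indecomposable.

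Finally, the equivalence between being indecomposable for $\tr$ and having no global descent is exactly the characterization recorded in the paragraph preceding the statement, so no further argument is needed there. The only genuinely delicate point is the coefficient-extraction step: one must check both that the trivial factorizations are split off correctly and that the single-valuedness of $\tr$ prevents any cancellation, so that the decomposable coordinates of $x$ are faithfully detected by $\bar\delta$.
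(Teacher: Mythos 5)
Your proposal is correct and follows exactly the route the paper intends: the corollary is stated without a separate proof because it is read off from Theorem \ref{AS} in precisely the way you describe, using freeness of the monoid $(S,\tr)$ (single-valuedness of $\tr$) so that coefficient extraction on the basis tensors $\mathcal M_u\otimes\mathcal M_v$ detects every decomposable coordinate. The only nitpick is the degree-zero case: for $\sigma=\epsilon$ the two ``trivial'' factorizations coincide, so your splitting formula needs the standard remark that a primitive element necessarily has vanishing counit, hence no $\mathcal M_\epsilon$-component — after which your argument goes through verbatim.
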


\section{Preliminaries on tableaux}

For unreferenced results quoted here, see \cite{S}. Denote by $T_n$ denotes the set of standard Young tableaux (we say simply {\em tableaux}) whose entries are $1,\ldots,n$. We denote by $(P(\sigma),Q(\sigma))$ the  pair of tableaux associated with $\sigma\in S_n$ by the Schensted correspondence. 

Let $T=\cup_{n\geq 0}T_n$ be the set of all standard tableaux. The {\em plactic equivalence} on $T$ is the smallest equivalence relation generated by the {\em Knuth relations} $xjiky\sim_{plax}xjkiy$, $xikjy\sim_{plax}xkijy$, $i<j<k$, for $i,j,k\in \mathbb{N}$ and $x,y \in \mathbb{N}^*$.

By Knuth's theorem, one has $P(\sigma)=P(\tau)$ if and only if $\sigma\sim_{plax}\tau$. Thus we may identify tableaux and plactic classes.
In the sequel, we use systematically this identification.

Following Taskin \cite{T} , we define the {\em weak order} of tableaux as follows: let $U,V\in T_n$; let $u,v\in S_n$ be such that 
$P(u)=U, P(v)=V$. Define the relation $U\leq V$ if $u\leq v$; then $\leq$ is the transitive closure of this relation. 

In other words, the weak order on tableaux is the smallest order on $T$ such  that the mapping $P:S\to T$ is increasing for the weak order.

This order was introduced by Melnikov \cite{M} (called there {\em Duflo order}) and Taskin \cite{T} (denoted there 
$\leq_{weak}$). The difficulty here is to show that it is indeed an order.

For two tableaux $A,U$, one has $A\leq U$ if and only if there exist $n\geq 1$ and permutations $\alpha_0,\ldots,\alpha_{n-1},\beta_1,\ldots,\beta_n$ such that 
\begin{equation}\label{<}
P(\alpha_0)=A, \alpha_0\leq \beta_1\sim\alpha_1\leq\beta_2\sim\ldots \alpha_{n-1}\leq\beta_n,P(\beta_n)=U.\end{equation}

The product $\tr$ of permutations induces a product of tableaux, still denoted by $\tr$. This follows from the next lemma.

\begin{lemma}\label{plax-comp} The plactic equivalence is compatible with the product $\tr$, that is: $u\sim_{plax}u' \Rightarrow v\tr u\sim_{plax} v\tr u'$.
\end{lemma}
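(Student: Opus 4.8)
The plan is to reduce the statement to a single elementary Knuth move and then to observe that such a move is purely local, so that prepending the shifted word $\bar v$ cannot interfere with it. Since $\sim_{plax}$ is by definition the transitive closure of the elementary Knuth relations, and since $v\tr u=\bar v u$ depends on $u$ only through left concatenation by the fixed word $\bar v$, it suffices by transitivity to treat the case in which $u'$ is obtained from $u$ by a single Knuth relation; the general case then follows by chaining the resulting equivalences.

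Concretely, I would write $u=x\,w\,y$ and $u'=x\,w'\,y$, where $x,y$ are (possibly empty) words and $(w,w')$ is one of the two elementary triples, namely $(jik,\,jki)$ or $(ikj,\,kij)$ with $i<j<k$, these being the middle letters of the patterns $xjiky\sim_{plax}xjkiy$ and $xikjy\sim_{plax}xkijy$. Applying the product gives $v\tr u=\bar v\,x\,w\,y$ and $v\tr u'=\bar v\,x\,w'\,y$. Setting $x'=\bar v x$, these two words are related by exactly the same Knuth relation, now with left context $x'$ in place of $x$: the permuted triple and the values $i<j<k$ are unchanged (they lie among $\{1,\ldots,p\}$), so in particular the hypothesis $i<j<k$ still holds. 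Hence $v\tr u\sim_{plax}v\tr u'$ in this base case, and transitivity closes the argument.

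The only point requiring attention, and it is precisely what makes the statement true, is that the Knuth relations are local rewrite rules whose applicability depends solely on the three letters being permuted and on the strict inequalities among their values; enlarging the left context from $x$ to $\bar v x$ alters neither. I therefore do not expect a genuine obstacle here: the shift $v\mapsto\bar v$ plays no role (any left concatenation would preserve the relation), and the argument is just the standard fact that the plactic congruence is a left congruence. The one thing to handle cleanly is the reduction to a single move, so that the chain of Knuth relations witnessing $u\sim_{plax}u'$ is transported block by block to a chain witnessing $v\tr u\sim_{plax}v\tr u'$.
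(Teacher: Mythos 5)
Your proof is correct and follows essentially the same route as the paper's: reduce by transitivity to a single Knuth move $u=xwy$, $u'=xw'y$, and observe that prepending $\bar v$ merely enlarges the left context to $\bar v x$, leaving the Knuth relation applicable. The paper writes out one of the two elementary cases and leaves the rest "to the reader"; you have simply made the transitivity reduction and the locality argument explicit.
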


\begin{proof} Suppose indeed that $u,u'\in S_p,v\in S_q$ and for some letters $1\leq i<j<k\leq p$, and words $x,y$, one 
has $u=xjiky$, $u'=xjkiy$. Then $\bar vu=\bar vxjiky,\bar vu'=\bar vxjkiy$ and therefore $v\tr u\sim_{plax}v\tr u'$. There are other 
similar cases, left to the reader.
\end{proof}

Therefore, one has for any permutations  $u,v$
$$
P(v\tr u)=P(v)\tr P(u),
$$
which means that $P$ is a homomorphism from the monoid $S$ into the monoid $T$, both with the product $\tr$.

Note that one may compute directly $V\tr U$ as follows: $V\tr U$ is the tableau obtained by letting 
fall $\bar V$ onto $U$, with $\bar V$ obtained by adding $p$ to each letter in $V$ (this follows from the dual Schensted correspondence, that is, column insertion). Thus $V\tr U$ is the tableau denoted $U/V$ in \cite{T}, p. 1109. 
For example:

\ytableausetup{aligntableaux=center}
$$U \ \ \begin{ytableau}
2\\
 1&  3 
 \end{ytableau}\ ,\ 
V \ \ \begin{ytableau}
*(blue!40) 1& *(blue!40) 2 \end{ytableau}
\ \ \ \  \rightarrow \ \ \ \ 
 {\begin{ytableau}
*(blue!40) 4& *(blue!40) 5 \end{ytableau}
\atop
\begin{ytableau}
2\\
 1&  3 
 \end{ytableau}}
 \ \ \ \ 
 \rightarrow\ \ \ \ 
 V\tr U \ \ 
 \begin{ytableau}
*(blue!40) 4\\
2& *(blue!40) 5\\
1&3 \end{ytableau}
$$
\bigskip

We need also the following.

\begin{lemma}\label{weak-comp} The weak order $\leq$ on tableaux is compatible with the product $\tr$: $U\leq U', V\leq V' \Rightarrow V\tr U\leq V'\tr U'$.
\end{lemma}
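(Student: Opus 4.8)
The plan is to reduce the two-sided statement to two one-sided statements and then to compose them. Concretely, I would first prove that $U\le U'$ implies $V\tr U\le V\tr U'$ for a fixed $V$, and symmetrically that $V\le V'$ implies $V\tr U\le V'\tr U$ for a fixed $U$; the lemma then follows by transitivity from the chain $V\tr U\le V'\tr U\le V'\tr U'$.

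The main point, and the main obstacle, is that the order $\le$ on tableaux is defined as a \emph{transitive closure}, so it cannot be manipulated directly. I would instead work with the explicit characterization \eqref{<}: proving $U\le U'$ amounts to exhibiting permutations $\alpha_0,\ldots,\alpha_{m-1},\beta_1,\ldots,\beta_m\in S_p$ with
$$P(\alpha_0)=U,\quad \alpha_0\le\beta_1\sim\alpha_1\le\beta_2\sim\cdots\le\beta_m,\quad P(\beta_m)=U'.$$
Choosing any permutation $v\in S_q$ with $P(v)=V$, I would apply $v\tr(-)$ to this entire chain, obtaining the sequence $v\tr\alpha_0,\,v\tr\beta_1,\,v\tr\alpha_1,\ldots,v\tr\beta_m$ in $S_{p+q}$. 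Each weak-order link $\alpha_{i}\le\beta_{i+1}$ is transported to $v\tr\alpha_i\le v\tr\beta_{i+1}$ by Lemma \ref{perm-comp} (using $v\le v$), and each plactic link $\beta_i\sim\alpha_i$ is transported to $v\tr\beta_i\sim v\tr\alpha_i$ by Lemma \ref{plax-comp}. Since $P$ is a homomorphism for $\tr$, the endpoints satisfy $P(v\tr\alpha_0)=V\tr U$ and $P(v\tr\beta_m)=V\tr U'$, so the transported sequence is exactly a chain of the form \eqref{<} witnessing $V\tr U\le V\tr U'$.

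For the symmetric statement I would run the same argument on a chain witnessing $V\le V'$, now fixing a representative $u\in S_p$ of $U$ and applying $(-)\tr u$. The weak-order links are again handled by Lemma \ref{perm-comp}. The one subtlety is that the plactic links now occur in the \emph{left} factor, so I need the left-handed analogue of Lemma \ref{plax-comp}, namely $v\sim_{plax}v'\Rightarrow v\tr u\sim_{plax}v'\tr u$. This is immediate by the same reasoning as in Lemma \ref{plax-comp}: a Knuth relation among the letters of $v$ remains a Knuth relation after adding $p$ to every letter (relative order is preserved), and appending $u$ on the right preserves plactic equivalence, so $\bar v u\sim_{plax}\bar v'u$, that is $v\tr u\sim_{plax}v'\tr u$. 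With this in hand the left-sided chain transports just as before, giving $V\tr U\le V'\tr U$; composing the two one-sided inequalities completes the proof.
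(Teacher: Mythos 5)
Your proof is correct and is essentially the paper's argument in expanded form: the paper's proof of this lemma is a one-line sketch invoking Lemma \ref{perm-comp}, the characterization of the order through Eq.~(\ref{<}), and the fact that $P$ is an increasing surjective homomorphism, which is precisely what you unpack by transporting a witnessing chain through $v\tr(-)$ and then through $(-)\tr u$ and composing by transitivity. Your one addition --- noting that the left factor requires the left-handed analogue of Lemma \ref{plax-comp}, and proving it by observing that shifting preserves Knuth patterns and that appending $u$ on the right preserves Knuth relations --- correctly fills a detail the paper leaves implicit (its proof of Lemma \ref{plax-comp} leaves the ``other similar cases'' to the reader).
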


\begin{proof} This follows from Lemma 
\ref{perm-comp}, and the characterization through Eq.(\ref{<}) of the order, using the fact that $P$ is an increasing surjective homomorphims $S\to T$.
\end{proof}

By Proposition 2.5 in \cite{LS}, p.133, the plactic equivalence is compatible with the restriction to intervals, and with standardization. It follows 
that the plactic equivalence is compatible with the composition of the two operations: if $u,v\in S_n$, $u\sim_{plax}v$, 
and $I$ is an interval of $[n]$, then $st(u\mid I)\sim_{plax} st(v\mid I)$. Thus, if $A=P(u)$, we may denote without 
ambiguity by $st(A\mid I)$ the tableau $P(st(u\mid I))$.
By the work of Sch\"utzenberger, the corresponding tableau is obtained by jeu-de-taquin straightening of the skew 
tableau which is the restriction to $I$ of the tableau $P(u)$; but we do not need this fact.

\begin{lemma}\label{restr-order} Let $A,B\in T_n$ such that $A\leq B$, and $I$ be an interval in $[n]$. Then $st(A\mid I)\leq st(B\mid I)$.
\end{lemma}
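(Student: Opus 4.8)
The plan is to transport a chain witnessing $A\leq B$ through the operation $w\mapsto st(w\mid I)$, applied termwise. Since $A,B\in T_n$, the characterization of the tableau order in Eq.~(\ref{<}) provides permutations $\alpha_0,\ldots,\alpha_{n-1},\beta_1,\ldots,\beta_n$, all in $S_n$, with
$$P(\alpha_0)=A,\quad \alpha_0\leq\beta_1\sim\alpha_1\leq\beta_2\sim\cdots\alpha_{n-1}\leq\beta_n,\quad P(\beta_n)=B,$$
where $\sim$ denotes plactic equivalence. I would then apply $st(\cdot\mid I)$ to every permutation appearing in this chain, and argue that the resulting sequence is again a chain of the same shape, now inside $S_{|I|}$ (each $\alpha_k\mid I$ and $\beta_k\mid I$ keeps exactly the letters of $I$, so all standardizations land in the single group $S_{|I|}$).

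The verification splits according to the two kinds of steps. For each weak-order step $\alpha_k\leq\beta_{k+1}$ in $S_n$, Lemma~\ref{restr-order-perm} yields $st(\alpha_k\mid I)\leq st(\beta_{k+1}\mid I)$. For each plactic step $\beta_k\sim\alpha_k$, the compatibility of plactic equivalence with the composite operation $st(\cdot\mid I)$, recalled just before the statement, gives $st(\beta_k\mid I)\sim st(\alpha_k\mid I)$. Hence the transformed sequence
$$st(\alpha_0\mid I)\leq st(\beta_1\mid I)\sim st(\alpha_1\mid I)\leq\cdots\leq st(\beta_n\mid I)$$
is once more a chain of exactly the form required by Eq.~(\ref{<}).

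Finally, because $st(A\mid I)$ is by definition $P(st(\alpha_0\mid I))$ (as $P(\alpha_0)=A$) and likewise $st(B\mid I)=P(st(\beta_n\mid I))$ (as $P(\beta_n)=B$), re-reading the transformed chain through Eq.~(\ref{<}) gives $st(A\mid I)\leq st(B\mid I)$, which is the claim. The only point that really needs care—and the main (modest) obstacle—is checking that the termwise image of the chain is still a legitimate chain, i.e.\ that the weak-order steps and the Knuth steps are each preserved separately; but this is precisely what the two compatibility statements guarantee, so, as \emph{emphasized} in the introduction, no jeu-de-taquin computation enters the argument.
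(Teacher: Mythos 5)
Your proposal is correct and follows essentially the same route as the paper's own (very terse) proof: the paper likewise combines the chain characterization of Eq.~(\ref{<}), Lemma~\ref{restr-order-perm} for the weak-order steps, and the plactic compatibility of $st(\cdot\mid I)$ for the Knuth steps. Your only cosmetic difference is that where the paper invokes ``$P$ is increasing'' to conclude, you instead re-read the transformed chain directly through Eq.~(\ref{<}), which is an equivalent way of finishing.
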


\begin{proof} This follows from the characterization through Eq.(\ref{<}) of the order, Lemma \ref{restr-order-perm}, the previous observation, and the fact that $P$ is increasing.
\end{proof}

\section{Main lemma}

Recall the Taskin weak order on tableaux, denoted $\leq$.

\begin{lemma}\label{main}
Let $n=p+q$ and $\Sigma\in \mathcal T_n$. Let $A=st(\Sigma\mid\{1,\ldots,p\})$, and let $B=st(\Sigma\mid
\{p+1,\ldots,n\})$.
Then for $U\in \mathcal T_p$, $V\in \mathcal T_q$, one has: $\Sigma\leq V\tr U$ if and only if $A\leq U$ and $B\leq V$.
\end{lemma}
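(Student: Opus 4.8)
The plan is to transfer the proof of Lemma~\ref{sigma-a-b} from permutations to tableaux by means of the surjective increasing homomorphism $P\colon S\to T$, which satisfies $P(v\tr u)=P(v)\tr P(u)$. The three-part structure of the permutation lemma should survive essentially unchanged: first establish $\Sigma\leq B\tr A$, then deduce the ``if'' direction from compatibility of the weak order with $\tr$, and finally extract the ``only if'' direction from the behaviour of restriction under $\tr$.

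First I would fix a representative $\sigma\in S_n$ with $P(\sigma)=\Sigma$ and set $a=\sigma\mid\{1,\ldots,p\}$ and $b=st(\sigma\mid\{p+1,\ldots,n\})$. Since $\sigma\mid\{1,\ldots,p\}$ is already a permutation of $\{1,\ldots,p\}$, standardizing it changes nothing, so that $A=st(\Sigma\mid\{1,\ldots,p\})=P(st(a))=P(a)$, and likewise $B=P(b)$. By part~1 of Lemma~\ref{sigma-a-b} one has $\sigma\leq b\tr a$ in $S_n$; applying $P$, which is increasing and multiplicative for $\tr$, yields $\Sigma\leq P(b)\tr P(a)=B\tr A$. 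This is the tableau analogue of step~1. For the ``if'' direction, suppose $A\leq U$ and $B\leq V$; then Lemma~\ref{weak-comp} gives $B\tr A\leq V\tr U$, and combined with $\Sigma\leq B\tr A$ this yields $\Sigma\leq V\tr U$.

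For the ``only if'' direction I would use the explicit form of the product. Writing $U=P(u)$ and $V=P(v)$, we have $V\tr U=P(v\tr u)$ with $v\tr u=\bar v u$ (concatenation), where the letters $1,\ldots,p$ occur exactly in the $u$-part and the letters $p+1,\ldots,n$ exactly in the $\bar v$-part. Hence $(v\tr u)\mid\{1,\ldots,p\}=u$ and $(v\tr u)\mid\{p+1,\ldots,n\}=\bar v$, so after standardizing, $st\bigl((V\tr U)\mid\{1,\ldots,p\}\bigr)=P(u)=U$ and $st\bigl((V\tr U)\mid\{p+1,\ldots,n\}\bigr)=P(v)=V$. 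Applying Lemma~\ref{restr-order} to $\Sigma\leq V\tr U$ with the two intervals $\{1,\ldots,p\}$ and $\{p+1,\ldots,n\}$ then gives $A\leq U$ and $B\leq V$ respectively, completing the proof.

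The main obstacle is purely bookkeeping rather than combinatorial: one must verify that the restriction-and-standardization operation on tableaux is compatible with both $P$ and the product $\tr$, so that the identities $A=P(a)$, $B=P(b)$ and the computation of the two restrictions of $V\tr U$ are legitimate. These compatibilities are exactly what the earlier observations supply (plactic invariance of restriction and standardization, the well-definedness of $st(A\mid I)$, and $P(v\tr u)=P(v)\tr P(u)$), so once they are invoked the argument reduces to a direct translation of the permutation case with no extra combinatorial input; in particular the delicate inversion bookkeeping of Lemma~\ref{sigma-a-b} is needed only once, at the level of the representative $\sigma$.
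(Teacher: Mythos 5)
Your proof is correct and follows essentially the same route as the paper's: part 1 lifts Lemma \ref{sigma-a-b} through the increasing homomorphism $P$ (using $P(v\tr u)=P(v)\tr P(u)$), part 2 uses Lemma \ref{weak-comp}, and part 3 applies Lemma \ref{restr-order} together with the explicit computation of the restrictions of $V\tr U$, exactly as in the paper. The only cosmetic difference is that you spell out why standardization is harmless in identifying $A=P(a)$, $B=P(b)$ and in computing $st\bigl((V\tr U)\mid I\bigr)$, which the paper leaves implicit.
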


\begin{proof}[Proof of Lemma \ref{main}]

1. We show first that $\Sigma \leq B\tr A$. Let $\sigma \in S_n$ be such that $\Sigma=P(\sigma)$.
%Let $\sigma$ be the reading word of $\Sigma$. 
Let $a=st(\sigma\mid \{1,\ldots,p\})$ and $b=st(\sigma\mid \{p+1,\ldots,n\})$.
%as in Lemma \ref{sigma-a-b}. 
Then $\sigma \leq b\tr a$ by Lemma \ref{sigma-a-b}. 
It follows that $\Sigma \leq B\tr A$ by Lemmas \ref{plax-comp} and \ref{weak-comp}.

%We have $P(a)=A, P(b)=B$: indeed, the reading word of the tableau $A=\Sigma \mid \{1,\ldots,p\}$ is $\sigma \mid \{1,\ldots,p\}=a$; therefore, $P(a)=A$. Moreover, let $B'$ be the skew tableau $st(\Sigma \mid \{p+1,\ldots,n\})$. Then the reading word of $B'$ is $b$. Hence $P(b)$ is equal to the jeu-de-taquin straightening of $B'$, that is $B$.
%
%We deduce $\Sigma=P(\sigma)\leq P(b\tr a)$ (by definition of the order) $=P(b)\tr P(a)$ (Corollary \ref{P-hom}) $=B\tr A$.
%
2. Suppose that $A\leq U$ and $B\leq V$. Then by 1. and Lemma \ref{weak-comp}, we have $\Sigma\leq B\tr A\leq V\tr U$.

3. Suppose now that $\Sigma\leq V\tr U$. Let $u\in S_p,v\in S_q$ be such that $U=P(u),V=P(v)$.
Then by Lemma \ref{restr-order}, we have $A=st(\Sigma\mid \{1,\ldots,p\})
\leq st((V\tr U)\mid \{1,\ldots,p\})=P(st((v\tr u)\mid\{1,\ldots,p\}))=P(u)=U$. 
Moreover, by the same lemma, $$B=st(\Sigma\mid \{p+1,\ldots,n\})\leq st((V\tr U)\mid \{p+1,\ldots,n\})
$$
$$=P(st((v\tr u)\mid\{p+1,\ldots,n\}))=P(v)=V.$$
\end{proof}

\section{Primitive elements in the Hopf algebra of tableaux}\label{prim}

The free $\Z$-module $\Z T$, based on the set $T$ of tableaux, becomes a structure of Hopf algebra, quotient 
of the Hopf algebra $\Z S$ of Section \ref{ZS}, and whose product and coproduct are therefore also denoted by $*$ and $\delta$. The quotient is obtained by identifying plactic equivalent permutations. 
In other words, consider the submodule $I$ spanned by the elements $u-v$, $u\sim_{plax} v$; then $I$ is an ideal and 
a co-ideal of $\Z S$, and the quotient $\Z S/I$ is canonically isomorphic with $\Z T$. Moreover, the canonical bialgebra homomorphism $\Z S\to\Z T$ maps each permutation $\sigma$ onto $P(\sigma)$. See \cite{PR}, Th\'eor\`eme 3.4 and 4.3 (iv), where the product 
and coproduct are there denoted $*'$ and $\delta'$.

%According to \cite{PR} 5.b p.85, the coproduct $\delta$ is directly defined on tableaux as follows: 
%for $\Sigma\in \mathcal T_n$, $\delta(\Sigma)$ is the sum, over all $p+q=n$, of all $A\otimes B$ obtained as in Lemma 
%\ref{main}.
Now we introduce a new basis of $\mathbb ZT$, following the method of Aguiar and Sottile \cite{AS}, replacing 
the weak order on permutations by the Taskin weak order on tableaux. The new basis (that we may call the {\em monomial basis}, following \cite{AS}) $\mathcal M_W$, $W\in T$, is completely defined by the identities

$$ \Sigma=\sum_{\Sigma\leq W}\mathcal M_W,$$

for all tableau $\Sigma$, via M\"obius inversion on the poset $(T,\leq)$.

\begin{theorem}\label{coproduct} Let $\Sigma\in \mathcal T_n$. Then
$$\delta (\mathcal M_\Sigma)=\sum_{\Sigma=V\tr U}\mathcal M_U\otimes \mathcal M_V.$$
\end{theorem}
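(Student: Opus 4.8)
The plan is to mirror exactly the proof of Theorem \ref{AS}, replacing permutations by tableaux and the weak order on permutations by the Taskin weak order, with Lemma \ref{main} playing the role that Lemma \ref{sigma-a-b} played in the permutation case. The strategy is to define an auxiliary $\Z$-linear map $\delta_1 \colon \Z T \to \Z T \otimes \Z T$ by $\delta_1(\mathcal M_\Sigma) = \sum_{\Sigma = V \tr U} \mathcal M_U \otimes \mathcal M_V$, and then show that $\delta_1$ coincides with the coproduct $\delta$ inherited from $\Z S$. Since the $\mathcal M_\Sigma$ form a basis of $\Z T$, establishing $\delta_1 = \delta$ on all of $\Z T$ suffices, and it is most convenient to verify the equality by evaluating both maps on the tableaux $\Sigma$ themselves (rather than on the $\mathcal M_\Sigma$), using the defining relation $\Sigma = \sum_{\Sigma \leq W} \mathcal M_W$.

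First I would expand $\delta_1(\Sigma)$ using linearity and the defining identity for the monomial basis:
\begin{equation*}
\delta_1(\Sigma) = \delta_1\Bigl(\sum_{\Sigma \leq W} \mathcal M_W\Bigr) = \sum_{\Sigma \leq W} \sum_{W = V \tr U} \mathcal M_U \otimes \mathcal M_V = \sum_{\Sigma \leq V \tr U} \mathcal M_U \otimes \mathcal M_V.
\end{equation*}
At this point I would invoke Lemma \ref{main}: writing $A = st(\Sigma \mid \{1,\ldots,p\})$ and $B = st(\Sigma \mid \{p+1,\ldots,n\})$ for each splitting $n = p + q$, the condition $\Sigma \leq V \tr U$ is equivalent to $A \leq U$ and $B \leq V$. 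This lets me reorganize the sum as
\begin{equation*}
\delta_1(\Sigma) = \sum_{0 \leq p \leq n} \Bigl(\sum_{A \leq U} \mathcal M_U\Bigr) \otimes \Bigl(\sum_{B \leq V} \mathcal M_V\Bigr) = \sum_{0 \leq p \leq n} st(\Sigma \mid \{1,\ldots,p\}) \otimes st(\Sigma \mid \{p+1,\ldots,n\}),
\end{equation*}
where the last equality is again the defining identity of the monomial basis, applied to $A$ and to $B$. The right-hand side is precisely the coproduct $\delta(\Sigma)$ as inherited from $\Z S$ via the quotient map $P$: indeed, $\delta$ on $\Z T$ sends $\Sigma = P(\sigma)$ to $\sum_i P(\sigma \mid \{1,\ldots,i\}) \otimes P(st(\sigma \mid \{i+1,\ldots,n\}))$, which by the well-definedness of $st(\Sigma \mid I)$ noted before Lemma \ref{restr-order} is exactly the displayed expression.

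The main obstacle, and the step deserving the most care, is the clean factorization of the double sum into a tensor product of two independent sums over $U$ and over $V$. This factorization is legitimate precisely because Lemma \ref{main} decouples the single constraint $\Sigma \leq V \tr U$ into the two independent constraints $A \leq U$ and $B \leq V$; without this decoupling the sum would not split. Since Lemma \ref{main} is already established, the remaining work is bookkeeping, but one must be attentive that the indexing by the cut point $p$ on the tableau side correctly matches the indexing by $i$ in the definition of $\delta$, and that the quotient map $P \colon \Z S \to \Z T$ genuinely transports the coproduct formula from permutations to tableaux (i.e. that $\delta$ on $\Z T$ is the one induced from $\Z S$, which is guaranteed by the fact that $I$ is a co-ideal). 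I would therefore close by noting explicitly that $\delta_1 = \delta$ on the basis $\{\Sigma\}$ of $\Z T$, hence as linear maps, and that evaluating this identity on $\mathcal M_\Sigma$ yields the claimed formula.
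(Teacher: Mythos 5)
Your proposal is correct and follows essentially the same route as the paper's own proof: the auxiliary map $\delta_1$, expansion of $\delta_1(\Sigma)$ via the defining identity of the monomial basis, the decoupling of the constraint $\Sigma\leq V\tr U$ into $A\leq U$ and $B\leq V$ by Lemma \ref{main}, and the identification of the resulting sum with $\delta(\Sigma)$ through the quotient map $P$ (the paper phrases this last step as $(P\otimes P)(\delta(\sigma))=\delta(P(\sigma))$, using that $P$ is a bialgebra homomorphism, which is exactly your co-ideal remark). No gaps.
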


\begin{proof}
Define the $\Z$-linear mapping $\delta_1:\Z T\mapsto \Z T\otimes \Z T$ by $$\delta_1(\mathcal M_W)=\sum_{W=V\tr U}\mathcal M_U\otimes \mathcal M_V.$$ It is 
enough to show that $\delta_1=\delta$.

We have $$\delta_1(\Sigma)=\delta_1(\sum_{\Sigma\leq W}\mathcal M_W)=\sum_{\Sigma\leq W}\sum_{W=V\tr U}
\mathcal M_U\otimes \mathcal M_V=\sum_{\Sigma\leq V\tr U}\mathcal M_U\otimes \mathcal  M_V$$
$$=\sum_{p+q=n}
\sum_{\Sigma\leq V\tr U \atop V\in \mathcal T_q,U\in \mathcal T_p}\mathcal M_U\otimes \mathcal  M_V. $$

This is by Lemma \ref{main}, and with its notations, equal to 
$$\sum_{p+q=n}
(\sum_{st(\Sigma\mid\{1,\ldots,p\})\leq U}\mathcal  M_U)
\otimes 
(\sum_{st(\Sigma\mid\{p+1,\ldots,n\})\leq V}\mathcal M_V)
$$
$$=\sum_{p+q=n}(st(\Sigma\mid\{1,\ldots,p\}))
\otimes (st(\Sigma\mid\{p+1,\ldots,n\})).$$
Choose $\sigma$ such that $P(\sigma)=\Sigma$. Then by definition of $st(\Sigma\mid I)$, the latter quantity is equal to
$$
\sum_{p+q=n}P(st(\sigma\mid\{1,\ldots,p\}))
\otimes P(st(\Sigma\mid\{p+1,\ldots,n\}))$$
$$=(P\otimes P)(\sum_{p+q=n}st(\sigma\mid\{1,\ldots,p\})
\otimes st(\sigma\mid\{p+1,\ldots,n\}))$$
$$=(P\otimes P)(\delta(\sigma))=\delta(P(\sigma)
=\delta(\Sigma)),$$
since $P$ is a homomorphism of bialgebra, as recalled at the beginning of Section \ref{prim}.
\end{proof}

\begin{corollary} The submodule of primitive elements of $\Z T$ is spanned by the $\mathcal M_\Sigma$ sucht that $\Sigma$ is indecomposable for the product $\tr$.
\end{corollary}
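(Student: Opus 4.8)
The plan is to read the corollary directly off the coproduct formula of Theorem \ref{coproduct}, in exact parallel with the way the corollary to Theorem \ref{AS} is deduced in the permutation case. Recall that an element $x\in\Z T$ is \emph{primitive} when $\delta(x)=x\otimes 1+1\otimes x$, where the unit $1$ is the empty tableau $\mathcal M_\epsilon=\epsilon$.

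First I would apply Theorem \ref{coproduct} and isolate the two trivial factorizations of $\Sigma$ from the rest. Since $\epsilon\tr\Sigma=\Sigma=\Sigma\tr\epsilon$ (the bar operation shifts by $0$ when one factor is empty), the decompositions with $U=\epsilon$ or $V=\epsilon$ contribute exactly $\mathcal M_\epsilon\otimes\mathcal M_\Sigma+\mathcal M_\Sigma\otimes\mathcal M_\epsilon=1\otimes\mathcal M_\Sigma+\mathcal M_\Sigma\otimes 1$. Hence
\[
\delta(\mathcal M_\Sigma)-\mathcal M_\Sigma\otimes 1-1\otimes\mathcal M_\Sigma=\sum_{\Sigma=V\tr U,\ U,V\neq\epsilon}\mathcal M_U\otimes\mathcal M_V.
\]
For a fixed $\Sigma$ the distinct factorizations give distinct pairs $(U,V)$, so the right-hand side is a sum of distinct basis elements of $\Z T\otimes\Z T$; it therefore vanishes precisely when $\Sigma$ admits no factorization $\Sigma=V\tr U$ with both factors nonempty, that is, precisely when $\Sigma$ is indecomposable for $\tr$. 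This establishes that each such $\mathcal M_\Sigma$ is primitive.

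For the reverse inclusion I would take an arbitrary $x=\sum_\Sigma c_\Sigma\mathcal M_\Sigma$ and compute, using the displayed identity,
\[
\delta(x)-x\otimes 1-1\otimes x=\sum_\Sigma c_\Sigma\sum_{\Sigma=V\tr U,\ U,V\neq\epsilon}\mathcal M_U\otimes\mathcal M_V.
\]
Reindexing the double sum by the pair $(U,V)$ — and noting that $V\tr U$ is a single well-defined tableau, so the basis element $\mathcal M_U\otimes\mathcal M_V$ arises only from $\Sigma=V\tr U$ — the coefficient of $\mathcal M_U\otimes\mathcal M_V$ equals $c_{V\tr U}$. By linear independence of the $\mathcal M_U\otimes\mathcal M_V$, primitivity of $x$ forces $c_{V\tr U}=0$ for all nonempty $U,V$, i.e. $c_\Sigma=0$ for every decomposable $\Sigma$. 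Thus the primitive submodule is spanned by the $\mathcal M_\Sigma$ with $\Sigma$ indecomposable.

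The only point requiring any care — the mild obstacle — is the bookkeeping of the two trivial factorizations together with the identification $\mathcal M_\epsilon=1$; everything else is forced by Theorem \ref{coproduct} and the linear independence of the monomial basis. In particular no appeal to uniqueness of $\tr$-factorization is needed, since the argument uses only that $(U,V)\mapsto V\tr U$ is a well-defined map.
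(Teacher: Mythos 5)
Your proof is correct and is essentially the paper's own (implicit) argument: the corollary is stated there as an immediate consequence of Theorem \ref{coproduct}, deduced exactly as you do by splitting off the two trivial factorizations $\Sigma=\epsilon\tr\Sigma=\Sigma\tr\epsilon$ and invoking linear independence of the elements $\mathcal M_U\otimes\mathcal M_V$. The only nit: your displayed identity fails for $\Sigma=\epsilon$ (the two trivial factorizations coincide, so the left side equals $-\mathcal M_\epsilon\otimes\mathcal M_\epsilon$ rather than $0$), but carrying this extra term through the second computation only adds the constraint $c_\epsilon=0$, which is harmless since $\epsilon$ is not indecomposable and $\mathcal M_\epsilon=1$ is group-like, not primitive.
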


The dimensions of the graded components of the submodule of primitive elements is therefore the sequence of the numbers of tableaux indecomposable for the product $\tr$; it is 
denoted $a_n$ in \cite{PR}, p.88-89. For $n=1,2,\ldots,10$, they are the numbers
$$
1,1,1,3,7,23,71,255, 911,3535.
$$
They appear as sequence
A140456 in the Online Encyclopedia of Integer Sequences \cite{OEIS}, with other interpretations.

\section{Further remarks}

\subsection{Product formulas using $\Delta$}

The product $\tr$, both for permutations and tableaux, plays a role in product formulas in the dual Hopf algebras of $\Z S$ and $\Z T$.

First, one has to consider also the product $\square$ of permutations: let $a\in S_p$, $b\in S_q$; then $a\square b=a\bar b$, where $\bar b$ is obtained from $b$ by adding $p$ to each letter in $b$.

Recall the {\em shifted shuffle product} of permutations, denoted $\overline{\shuffle}$: $a \overline{\shuffle} b$ is the shuffle of $a$ and $\bar b$. This product is the dual product of the coproduct $\delta$ of $\Z S$. On has

\begin{theorem}(Loday-Ronco \cite{LR} Theorem 4.1)
Let $a,b\in S$. Then
$$
a \overline{\shuffle} b=\sum_{a\square b\leq \sigma\leq b\tr a} \sigma.
$$
\end{theorem}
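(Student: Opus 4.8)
The plan is to prove the identity by comparing inversion sets, using the characterization of the right weak order recalled in Section~2 ($\sigma\le\tau$ iff $Inv(\sigma)\subseteq Inv(\tau)$) together with the fact that a permutation is determined by its inversion set. Write $a\in S_p$, $b\in S_q$, $n=p+q$, and let $C=\{(j,i):p<j\le n,\ 1\le i\le p\}$ be the set of \emph{cross pairs} (a large letter over a small one). Since $a$ and $\bar b$ use the disjoint letter blocks $\{1,\dots,p\}$ and $\{p+1,\dots,n\}$, the shifted shuffle is multiplicity-free, and $\sigma\in a\,\overline{\shuffle}\,b$ iff $\sigma\mid\{1,\dots,p\}=a$ and $\sigma\mid\{p+1,\dots,n\}=\bar b$. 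Because restriction to a block preserves relative order, the small--small inversions of $\sigma$ are exactly the inversions of $\sigma\mid\{1,\dots,p\}$ and similarly for the large letters; hence $\sigma$ is such a shuffle iff its small--small inversions equal $Inv(a)$, its large--large inversions equal $Inv(\bar b)$, and its cross inversions are arbitrary.

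Next I would compute the two endpoints. In $a\square b=a\bar b$ every small letter precedes every large letter, so there are no cross inversions and $Inv(a\square b)=Inv(a)\cup Inv(\bar b)$; in $b\tr a=\bar b a$ every large letter precedes every small letter, so all cross pairs occur and $Inv(b\tr a)=Inv(a)\cup Inv(\bar b)\cup C$ (the same bookkeeping already used in the proof of Lemma~\ref{sigma-a-b}). The interval condition $a\square b\le\sigma\le b\tr a$ then reads
$$Inv(a)\cup Inv(\bar b)\ \subseteq\ Inv(\sigma)\ \subseteq\ Inv(a)\cup Inv(\bar b)\cup C.$$
Splitting $Inv(\sigma)$ according to the three types of pairs and using that $C$ is disjoint from the small--small and large--large pairs, this sandwich is equivalent to pinning the small--small inversions to $Inv(a)$, the large--large inversions to $Inv(\bar b)$, and leaving the cross inversions free --- precisely the description of $a\,\overline{\shuffle}\,b$ obtained above. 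Reading the equivalence in both directions shows that the interval and the shifted shuffle coincide as sets of permutations, and summing proves the theorem.

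I expect the only delicate point to be the ``sandwich $\Rightarrow$ shuffle'' direction: one must argue that a permutation lying in the weak-order interval can neither acquire a same-side inversion beyond those of $a$ and $\bar b$ (ruled out by the upper bound) nor lose one (ruled out by the lower bound), so that its two subwords are forced to equal $a$ and $\bar b$. Conversely one might worry whether every cross-inversion pattern is actually realised inside the interval; but since we range over genuine permutations throughout, this is automatic, and no separate realizability argument is needed.

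As an alternative I would note a shorter route using the paper's own machinery. Lemma~\ref{sigma-a-b} (with $u=a$, $v=b$) gives $\sigma\le b\tr a$ iff $\sigma\mid\{1,\dots,p\}\le a$ and $st(\sigma\mid\{p+1,\dots,n\})\le b$; applying the same lemma to the reversed word $\tilde\sigma$, through the conjugation $v\tr u=\widetilde{\tilde u\square\tilde v}$ and the fact that reversal complements inversion sets (hence reverses $\le$), yields $a\square b\le\sigma$ iff $a\le\sigma\mid\{1,\dots,p\}$ and $b\le st(\sigma\mid\{p+1,\dots,n\})$. Intersecting the two conditions forces $\sigma\mid\{1,\dots,p\}=a$ and $st(\sigma\mid\{p+1,\dots,n\})=b$, i.e.\ $\sigma\in a\,\overline{\shuffle}\,b$, recovering the same conclusion.
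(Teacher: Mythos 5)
Your proposal is correct, but there is nothing in the paper to compare it against: the paper does not prove this statement, it simply imports it as Theorem 4.1 of Loday--Ronco \cite{LR}. So you have supplied a proof where the paper gives only a citation. Your first argument is complete and self-contained, and it uses exactly the tools the paper sets up in Section 2: the characterization $u\leq v \Leftrightarrow Inv(u)\subseteq Inv(v)$ of the right weak order, and the fact that a word on a fixed set of letters is determined by its inversion set. Splitting pairs $(j,i)$ into small--small, large--large and cross pairs, your computations $Inv(a\square b)=Inv(a)\cup Inv(\bar b)$ and $Inv(b\tr a)=Inv(a)\cup Inv(\bar b)\cup C$ are right, and the sandwich condition then pins the small--small inversions to $Inv(a)$ and the large--large inversions to $Inv(\bar b)$ while leaving cross inversions unconstrained; since this is precisely the condition $\sigma\mid\{1,\dots,p\}=a$ and $\sigma\mid\{p+1,\dots,n\}=\bar b$, i.e.\ membership in the (multiplicity-free, as the letter blocks are disjoint) shifted shuffle, the two sets of permutations coincide and the identity of sums follows. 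You are also right that no realizability question arises, because both sides range over genuine permutations. Your alternative route is sound as well and is the one most in the spirit of this paper: Lemma \ref{sigma-a-b} gives the upper half of the sandwich directly, and conjugating by word reversal --- which complements inversion sets, hence anti-automorphism of the weak order, and commutes with restriction to $\{1,\dots,p\}$ and with standardization --- turns that lemma into the dual statement characterizing $a\square b\leq\sigma$; antisymmetry of the order then forces the two restricted subwords to equal $a$ and $\bar b$. The first argument is the more elementary and fully explicit; the second has the merit of exhibiting the Loday--Ronco theorem as an easy corollary of the paper's own Lemma \ref{sigma-a-b}, which is pleasing given that the paper's Section 7 presents the theorem adjacent to that machinery without connecting them.
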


In other words, $a \overline{\shuffle} b$ is the sum of all permutations in the interval $[a\square b, b\tr a]$ of the weak order.

The product $\square$ is compatible with the plactic equivalence, hence $\square$ is well-defined on tableaux. Denote also by $\overline{\shuffle}$ the product which is the dual of $\delta$ in the dual coalgebra of $\Z T$. Then one has

\begin{theorem}(Taskin \cite{T} Theorem 4.1)
Let $A,B\in \mathcal T$. Then
$$
A\overline{\shuffle} B=\sum_{A\square B\leq T\leq B\tr A} T.
$$
\end{theorem}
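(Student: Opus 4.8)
The plan is to read off $A\overline{\shuffle} B$ directly from the definition of $\overline{\shuffle}$ as the product dual to the coproduct $\delta$ of $\Z T$, obtaining first a description of the product by a ``restriction'' condition, and then to convert that condition into the interval $[A\square B,\,B\tr A]$ by combining the Main Lemma (Lemma \ref{main}) with its analogue for the product $\square$.

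First I would fix on $\Z T$ the pairing $\langle T,T'\rangle=1$ if $T=T'$ and $0$ otherwise, which makes the tableaux an orthonormal basis; then $\overline{\shuffle}$ is characterized by $\langle A\overline{\shuffle} B,\,T\rangle=\langle A\otimes B,\,\delta(T)\rangle$. Since $P$ is a coalgebra morphism, the proof of Theorem \ref{coproduct} already yields, for any $T\in T_n$,
$$\delta(T)=\sum_{p+q=n}st(T\mid\{1,\ldots,p\})\otimes st(T\mid\{p+1,\ldots,n\}).$$
Hence, writing $p=|A|$, $q=|B|$ and $n=p+q$, the coefficient $\langle A\overline{\shuffle} B,\,T\rangle$ is $1$ when $st(T\mid\{1,\ldots,p\})=A$ and $st(T\mid\{p+1,\ldots,n\})=B$, and $0$ otherwise. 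This gives the restriction form
$$A\overline{\shuffle} B=\sum_{st(T\mid\{1,\ldots,p\})=A,\ st(T\mid\{p+1,\ldots,n\})=B}T.$$

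It then remains to show that, for $T\in T_n$, the equalities $st(T\mid\{1,\ldots,p\})=A$ and $st(T\mid\{p+1,\ldots,n\})=B$ hold together if and only if $A\square B\leq T\leq B\tr A$. Lemma \ref{main} supplies the upper half: $T\leq B\tr A$ if and only if $st(T\mid\{1,\ldots,p\})\leq A$ and $st(T\mid\{p+1,\ldots,n\})\leq B$. I would complement it with the matching lower half, namely $A\square B\leq T$ if and only if $A\leq st(T\mid\{1,\ldots,p\})$ and $B\leq st(T\mid\{p+1,\ldots,n\})$. Intersecting the two equivalences pins down $st(T\mid\{1,\ldots,p\})=A$ and $st(T\mid\{p+1,\ldots,n\})=B$, so the interval in the statement is exactly the index set of the restriction form and the two sums coincide.

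The \emph{main obstacle} is precisely this $\square$-analogue of the Main Lemma, and I would prove it by transposing every ingredient of Lemma \ref{main} from $\tr$ to $\square$. At the permutation level one checks, by inspecting inversion sets, that $\square$ is compatible with the weak order and with plactic equivalence --- the $\square$-counterparts of Lemmas \ref{perm-comp}, \ref{plax-comp} and \ref{weak-comp} --- which hold because $Inv(u\square v)$ is the disjoint union of $Inv(u)$ and a shifted copy of $Inv(v)$, with no inversion crossing the block boundary. One then establishes the $\square$-version of Lemma \ref{sigma-a-b}, that $a\square b\leq\sigma$ if and only if $a\leq st(\sigma\mid\{1,\ldots,p\})$ and $b\leq st(\sigma\mid\{p+1,\ldots,n\})$, again by the inversion-set computation. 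The delicate point to watch is that $\square$ operates on positions while the restrictions $\sigma\mid I$ are taken on values, so the bookkeeping runs dually to that of Lemma \ref{sigma-a-b}; once the permutation statement is in hand, the lift to tableaux is routine, via the characterization (\ref{<}) of the Taskin order, Lemma \ref{restr-order}, and the increasing surjective homomorphism $P$, exactly as in the proof of Lemma \ref{main}. I note that this route uses neither jeu de taquin nor the Loday--Ronco theorem, in keeping with the philosophy of the Note; in fact the permutation identity $a\overline{\shuffle} b=\sum_{a\square b\leq\sigma\leq b\tr a}\sigma$ is recovered by the very same argument.
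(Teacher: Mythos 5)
Your proposal is correct, but there is nothing in the paper to compare it against: the paper does not prove this statement, it quotes it from Taskin \cite{T}, whose original argument lives inside her own study of the four partial orders on tableaux. So what you have is a genuinely different, self-contained derivation within the Note's machinery, and it works. The dual-pairing definition of $\overline{\shuffle}$, together with $\delta(T)=\sum_{p+q=n}st(T\mid\{1,\ldots,p\})\otimes st(T\mid\{p+1,\ldots,n\})$ (valid because $P$ is a morphism of coalgebras, as used in the proof of Theorem \ref{coproduct}), does give the $0$--$1$ ``restriction form'' of the product; Lemma \ref{main} is exactly the upper half of the interval condition; and your $\square$-analogue of the Main Lemma is true and provable just as you indicate, the whole point being $Inv(u\square v)=Inv(u)\sqcup(Inv(v)+p)$ with no inversion crossing the blocks, after which the lift to tableaux repeats the proof of Lemma \ref{main} word for word using Eq.~(\ref{<}), Lemma \ref{restr-order} and the increasing surjective homomorphism $P$ (plactic compatibility of $\square$ is already asserted in the paper). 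Two minor remarks. First, the step ``intersecting the two equivalences pins down equality'' uses antisymmetry of the Taskin relation, i.e.\ the nontrivial fact that it is a partial order; this is legitimate, since the paper imports it from \cite{T} and \cite{M}, but it deserves to be said, as it is the one place where something deep about the order is invoked. Second, your flagged ``delicate point'' (positions versus values) is a red herring: both $\square$ and $\tr$ are shifted concatenations and both restrictions are by values; the only asymmetry is that every cross pair is an inversion of $v\tr u$ while none is an inversion of $u\square v$, which is precisely why one product sits at the top of the interval and the other at the bottom. What your route buys is a proof of Taskin's product formula (and, as you observe, of the Loday--Ronco formula one level down) that avoids jeu de taquin entirely, very much in the spirit of the Note; what the paper's citation buys is brevity.
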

In other words, $A\overline{\shuffle} B$ is the sum of all tableaux belonging to  the interval $[A\square B, B\tr A]$ of the weak order. Note that the product $A\square B$ is denoted $A\backslash B$ in \cite{T}, p. 1109. It corresponds to put the tableau $\overline{B}$ aside the tableau $A$, then push every row of $\overline{B}$ toward $A$. For example: 
\bigskip

%\ytableausetup{aligntableaux=center}
$A \ \ \ytableausetup{aligntableaux=bottom}
\begin{ytableau}
2\\
 1&  3 
 \end{ytableau}\ ,\ 
B \ \ \begin{ytableau}
*(blue!40) 3\\
*(blue!40) 2\\
*(blue!40)1& *(blue!40) 4 \end{ytableau}\ \ 
\rightarrow$ \ \ 
 $\ytableausetup{aligntableaux=bottom}
\begin{ytableau}
2\\
 1&  3 
 \end{ytableau} \ \ 
\begin{ytableau}
*(blue!40) 6\\
*(blue!40) 5\\
*(blue!40)4& *(blue!40) 7 \end{ytableau}$\ \ 
 $\ytableausetup{aligntableaux=bottom}
 \rightarrow\ \ 
 A\square B \ \ 
 \begin{ytableau}
*(blue!40) 6\\
2& *(blue!40) 5\\
1&3 & *(blue!40) 4 &*(blue!40) 7\\  \end{ytableau}
$
\bigskip

\subsection{Multiplicative basis}

Note that Theorem \ref{coproduct} means that in the dual Hopf algebra of $\Z T$, the dual basis $\mathcal M_T^*$ of the basis $\mathcal M_T$ is {\em multiplicative}, in the sense of \cite{DHNT}: one has for tableaux $A,B$, $$\mathcal M_A^*\overline{\shuffle}\mathcal M_B^*=\mathcal M_{B\tr A}^*.$$

Indeed, using the canonical pairing between $\Z T$ and its dual, $$\l \mathcal M_A^*\overline{\shuffle}\mathcal M_B^*,\mathcal 
M_\Sigma\r=\l\mathcal M_A^* \otimes \mathcal M_B^*,\delta(\mathcal M_\Sigma)\r=\l \mathcal M_A^* \otimes \mathcal 
M_B^*,\sum_{\Sigma=V\tr U}\mathcal M_U\otimes \mathcal M_V\r$$
$$=\sum_{\Sigma=V\tr U}\l \mathcal M_A^*,\mathcal M_U\r\l\mathcal M_B^*,
\mathcal M_V\r. $$

This is 1 exactly when $\Sigma=B\tr A$, otherwise it is 0. Thus it is equal to $\l \mathcal M_{B\tr A}^*,\mathcal M_\Sigma\r$, which proves the formula.

\subsection{A counter-example by Franco Saliola}

Our basis $\mathcal M_\Sigma$ was inspired by the construction of Aguiar and Sottile in \cite{AS}. They prove further 
that the structure constants for multiplication are positive (Theorem 4.1 in their article). This is not true in the case of Poirier--Reutenauer Hopf algebra of tableaux, as shows 
a counter-example of Franco Saliola, that he kindly permitted us to reproduce here. Indeed, he computed that
$$
\mathcal M_{P(123)}*\mathcal M_{P(123)}=\mathcal M_{P(123456)}$$
$$-\mathcal M_{P(241356)}-\mathcal M_{P(251346)} -\mathcal M_{P(261345)}-\mathcal M_{P(351236)}-\mathcal M_{P(361245)}-\mathcal M_{P(461235)}$$
$$+\mathcal M_{P(256134)}+\mathcal M_{P(346125)}+\mathcal M_{P(356124)} +2\mathcal M_{P(456123)}$$
$$+2\mathcal M_{P(362514)}-\mathcal M_{P(462513)} \mathcal -M_{P(543126)}.
$$

\medskip

{\em Acknowledgments}. We thank Franco Saliola, who allowed us to include a counter-example arising from his 
computations.

\end{document}